\documentclass[10pt,letterpaper]{article}

\usepackage{amsmath,amssymb,amsthm}
\usepackage{mathtools}
\usepackage{enumitem}
\usepackage[letterpaper,left=4cm,right=3cm,top=3cm,bottom=3cm]{geometry}
\usepackage{hyperref}
\usepackage{amsrefs}

\usepackage{tikz}
\usetikzlibrary{arrows.meta,decorations.pathreplacing,calc}
\usepackage{pgfplots}
\pgfplotsset{compat=1.18}

\usepackage{todonotes}
\newcommand{\NN}{\ensuremath{\mathbb N}}
\newcommand{\RR}{\ensuremath{\mathbb R}}
\newcommand{\cA}{\mathcal{A}}
\newcommand{\cB}{\mathcal{B}}
\newcommand{\cG}{\mathcal{G}}
\newcommand{\cS}{\mathcal{S}}

\DeclarePairedDelimiter{\abs}{\lvert}{\rvert}
\DeclarePairedDelimiter{\floor}{\lfloor}{\rfloor}
\DeclarePairedDelimiter{\ceiling}{\lceil}{\rceil}
\DeclarePairedDelimiter{\multi}{\{\!\{}{\}\!\}}
\newcommand{\multibinom}[2]{\left(\kern-0.3em\left(\genfrac{}{}{0pt}{}{#1}{#2}\right)\kern-0.3em\right)}
\DeclareMathOperator{\supp}{supp}
\DeclareMathOperator{\mult}{mult}

\newtheorem{thm}{Theorem}
\newtheorem{lem}[thm]{Lemma}
\newtheorem{cor}[thm]{Corollary}

\theoremstyle{remark}

\begin{document}
  \title{On the Thickness of Infinite Generalized Sidon Sets, II}
  \author{Kevin O'Bryant\thanks{Email: \texttt{kevin.obryant@csi.cuny.edu}.\\ 2020 Mathematics Subject Classification: 05B10, 11B83, 11B05.}}
  \date{\today}
  \maketitle

\begin{abstract}
A set $\cA$ of nonnegative integers is a $B_h$-set if the sums
$a_1+\cdots+a_h$ with $a_1\le\cdots\le a_h$ and $a_i\in\cA$ are distinct;
a $B_2$-set is a Sidon set. 
Write $A(n)=|\cA\cap[0,n)|$. We prove that for every even $h$ and every $B_h$-set $\cA$,
\[
  \liminf_{n\to\infty} \frac{A(n)}{\sqrt[h]{n/\log n}}
  \le \left(\frac{\pi}{\log 2} 
        \cdot \frac{\Gamma(1+h/2)^2}{\Gamma(1+1/h)^{h}}\right)^{1/h}.
\]
\end{abstract}

\section{Introduction}\label{sec:intro}
A set $\cA \subseteq \NN$ of nonnegative integers is a $B_h$-set if the sums 
  \[a_1+\cdots+a_h, \qquad a_1\le\cdots\le a_h, a_i\in\cA\] 
are distinct. 
A $B_2$-set is a Sidon set.
For a set $\cA\subseteq\NN$, we denote the counting function $|\cA\cap[0,n)|$ with the corresponding capital latin letter, e.g., $A(n)\coloneqq |\cA\cap[0,n)|$.  

\begin{thm}\label{thm:main}
Let $h$ be a positive even integer, and let $\cA$ be a $B_h$-set.
Then
  \[\liminf_{n\to\infty} \frac{A(n)}{\sqrt[h]{n/\log n}} \le 
  \left(\frac{\pi}{\log 2} \cdot \frac{\Gamma(1+\tfrac h2)^2}{\Gamma(1+\tfrac1h)^{h}}\right)^{1/h}.\]
\end{thm}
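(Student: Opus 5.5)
The approach is a contradiction argument in the style of Erdős's $\liminf A(n)\sqrt{\log n/n}$ bound for Sidon sets, adapted to $h$-fold sums and using the even-$h$ structure to pass to differences. Suppose $A(n)\ge c\,(n/\log n)^{1/h}$ for all large $n$, with $c$ exceeding the stated constant. Write $h=2k$. The $B_h$ property gives two counts.

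\textbf{Step 1 (the $B_h$ count).} The sums $s=a_1+\cdots+a_k$ (as multisets) have the property that the differences $s-s'$ between distinct multisets with disjoint supports are all distinct. So the $k$-fold sumset multiset $\cS$ behaves like a Sidon-type set. Concretely, for the generating function $f(z)=\sum_{a\in\cA}z^a$, the function $f(z)^k$ has coefficients $r_k(m)$. Then $\sum_m r_k(m)^2$ over a window, or equivalently $\int_0^1 |f(re(\theta))|^{2k}\,d\theta$, is at most roughly $k!\sum_m r_k^*(m)$ up to lower-order diagonal terms. Here $r_k^*$ counts multisets.

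\textbf{Step 2 (dyadic blocks).} Work with the blocks $\cA\cap[2^j,2^{j+1})$. The point of the $\log n$ in the denominator is that there are about $\log_2 n$ dyadic scales. Each block is a $B_h$-set living in an interval of length $2^j$. For a $B_{2k}$-set inside $[0,N)$, the $k$-fold sums occupy an interval of length $kN$. Their representation function has $L^2$ norm controlled by the distinctness of differences. The sharp constant comes from comparing the number of $k$-sums near the center of this interval with a Gaussian/Irwin–Hall profile. This is where $\Gamma(1+1/h)^h$ arises: the volume of the $\ell^h$-type region $\{x_1^h+\cdots\}$, or rather the density $A(x)\approx c x^{1/h}$ pushed forward under summation, produces the factor $\Gamma(1+1/h)^{h}/\Gamma(1+h/2)^{2}$ when one computes $\int$ of the square of the $k$-fold convolution of $x^{1/h-1}$ densities. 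The plan is to compute, under the hypothesised growth $A(x)\sim c(x/\log x)^{1/h}$, a lower bound for
\[
  \sum_{m} \Bigl(\sum_{\substack{a_1+\cdots+a_k=m}} 1\Bigr)^2
\]
via partial summation and Beta integrals, giving $\gg c^{h}\,\Gamma(1+1/h)^{h}/\Gamma(1+h/2)^2 \cdot n/\log n$ per appropriate scale.

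\textbf{Step 3 (summing over scales).} The $B_h$ condition caps the total, summed across all pairs of scales, by about $n$ times a constant. The scales interact only through a geometric factor. Summing $\sum_j 2^j/j$ against the constraint produces the $\log 2$ and the $\pi$. I expect the $\pi$ to come from a Fejér-kernel or circle-integral bound of the form $\int_0^1|f(re(\theta))|^{h}\,d\theta$ versus $\sum r^{2a}$, with the optimal choice of $r=1-1/n$. Letting $c$ exceed the constant then gives the contradiction.

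\textbf{Main obstacle.} The main difficulty is the sharp constant. Making the upper bound on the $L^{h}$ norm exactly match the lower bound requires a careful choice of weight, probably $r^a$ with $r\to1$ tuned to $n$, and a careful handling of diagonal terms. I would first get the order of magnitude, then optimise the weight by a Laplace-transform computation. I would also check that the $\Gamma(1+h/2)^2$ arises as $(k!)^2$ from unordered versus ordered sums.
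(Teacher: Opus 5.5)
\textbf{There is a genuine gap.} The quantity you propose to estimate in Steps 1--2 cannot produce a contradiction, and the mechanism that yields the logarithm is missing.

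For a $B_{2k}$-set, $\sum_m r_k(m)^2$ (equivalently $\int_0^1\abs{f(re(\theta))}^{2k}\,d\theta$, by Parseval) counts solutions of $a_1+\cdots+a_k=b_1+\cdots+b_k$. All of these are trivial, so it is just a weighted count of $k$-multisets. Comparing it with a lower bound from the growth hypothesis gives nothing beyond the trivial $B_k$ bound.

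What must be controlled is the number of pairs $s,s'\in\cS=k\cA$ with $0<s-s'<N$ and $s,s'<W\approx N^2$. This is the block energy $\sum_{\ell\le M}\binom{F_\ell}{2}$ for $M\approx N$ blocks of a \emph{common} width $N$, averaged over shifts.

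Dyadic blocks lose the logarithm. Under the hypothesis, $\cS\cap[2^j,2^{j+1})$ has only about $\sqrt{2^j/j}$ elements, hence about $2^j/j$ pairs, far fewer than the $2^j$ available differences. Summing over $j\le\log_2 n$ gives $O(n/\log n)$, well below the cap $n$. With equal-width blocks, a density $\approx D/\sqrt{x\log x}$ for $\cS$ gives $\sum_\ell F_\ell^2\approx ND^2\sum_{\ell\le N}\frac{1}{\ell\log(\ell N)}\approx ND^2\log 2$, and that is the source of $\log 2$. The paper makes this rigorous (Lemma~\ref{lem:energy}) with a weighted Cauchy inequality that needs only a \emph{lower} bound on the counting function. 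Note that you may not assume $A(x)\sim c(x/\log x)^{1/h}$; only the inequality $\ge$ is available.

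Second, the pairs $(s,s')$ with $V_s\Cap V_{s'}\neq\emptyset$ are not ``lower-order diagonal terms''. Their differences need not be distinct, and bounding them is the crux; it is what forced Jia's regularity hypothesis. The paper writes $V_s=\alpha\uplus\beta$ and $V_{s'}=\alpha\uplus\delta$. It then uses Jia's lemma (Lemma~\ref{lem:jia}), $\abs{\Phi(2r,k-r,k-r;0,N)}=O(N)$, to get a contribution $O\bigl(N\,A(W)^r/A(N)^{2r}\bigr)$. This is $o(N)$ using only $A(W)=O(W^{1/h})$ and the choice $M=N/\log^3 N$. Your sketch has nothing playing this role.

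Finally, your attribution of the constants is off. The factor $\pi$ does not come from a Fejér kernel. It comes from $\Gamma(3/2)=\sqrt\pi/2$ in the Dirichlet integral $\int_{u_i>0,\ \sum u_i\le m}\prod_i u_i^{1/h-1}\,du=\Gamma(1/h)^k\sqrt m/\Gamma(3/2)$. Via $a_j<x_j$, where $\tau(x_j/\log x_j)^{1/h}=j$, this gives the lower bound $\liminf S(m)/\sqrt{m/\log m}\ge 2\tau^k\Gamma(1+\tfrac1h)^k/(k!\sqrt\pi)$ of Lemma~\ref{lem:simplex}. Comparing this with the energy upper bound $\sqrt{4/\log 2}$ and squaring gives the theorem. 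As you guessed, $\Gamma(1+\tfrac h2)^2=(k!)^2$ arises from ordered versus unordered sums.
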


\begin{cor}
  Let $\cA=\{0 \le a_1<a_2<\dots\}$ be an infinite $B_h$-set, $h$ even. Then
  \[\limsup_{n\to\infty} \frac{a_n}{n^h \log n} \ge \frac{\log 2}{\pi} \cdot \frac{h \, \Gamma(1+\tfrac1h)^{h}}{\Gamma(1+\tfrac h2)^2}\]
\end{cor}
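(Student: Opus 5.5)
We deduce the Corollary from Theorem~\ref{thm:main} by inverting the counting function. Write
\[
C \coloneqq \left(\frac{\pi}{\log 2} \cdot \frac{\Gamma(1+\tfrac h2)^2}{\Gamma(1+\tfrac1h)^{h}}\right)^{1/h},
\]
so the claimed constant is $h/C^h$. Argue by contradiction. Suppose $\limsup_{n} a_n/(n^h\log n) < h/C^h$. Fix $\lambda$ with
\[
\limsup_{n} \frac{a_n}{n^h \log n} < \lambda < \frac{h}{C^h}.
\]
Then there is $n_0$ with $a_n \le \lambda n^h \log n$ for all $n\ge n_0$. We will show that this forces $A(x)$ to be too large for every large real $x$, contradicting the $\liminf$ bound of Theorem~\ref{thm:main}.

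\textbf{Inverting the bound.} Given large $x$, set
\[
m = m(x) \coloneqq \max\{n : \lambda n^h \log n < x\}.
\]
For $n \le m$ with $n\ge n_0$ we have $a_n \le \lambda n^h\log n < x$. Hence $a_1,\dots,a_m$ all lie in $[0,x)$, and $A(x) \ge m$ (the finitely many indices below $n_0$ cause no trouble once $m\ge n_0$). It remains to find the asymptotic size of $m$. From $\lambda m^h \log m \sim x$ we get $\log x \sim h\log m$, so $\log m \sim (\log x)/h$. This gives
\[
m \sim \left(\frac{h\,x}{\lambda \log x}\right)^{1/h}.
\]
The maximality of $m$ only changes $m$ by $1$ relative to the real solution of $\lambda t^h\log t = x$, which is negligible.

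\textbf{Conclusion.} Therefore
\[
\liminf_{x\to\infty} \frac{A(x)}{\sqrt[h]{x/\log x}} \ge \left(\frac{h}{\lambda}\right)^{1/h} > C,
\]
since $\lambda < h/C^h$. This contradicts Theorem~\ref{thm:main}. Restricting to integer $x$ is harmless.

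\textbf{Remarks.} The only mildly delicate step is the asymptotic inversion, specifically justifying $\log m \sim (\log x)/h$. This follows from $m^h \le x/(\lambda\log m)$ together with $m\to\infty$, which gives
\[
h\log m = \log x - \log\log m + O(1).
\]
A finite $B_h$-set is excluded by hypothesis. Note that $A(n)\to\infty$, but it is the limsup of $a_n$ that matters, and this is exactly what the contrapositive handles.
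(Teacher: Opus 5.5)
Your proof is correct: assuming $a_n\le\lambda n^h\log n$ with $\lambda<h/C^h$, inverting the counting function gives $\liminf A(x)/(x/\log x)^{1/h}\ge(h/\lambda)^{1/h}>C$, contradicting Theorem~\ref{thm:main}, and you justify the step $\log m\sim(\log x)/h$ properly. The paper gives no separate proof of the corollary and treats it as exactly this immediate inversion of Theorem~\ref{thm:main}, so your route is the intended one.
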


Chen~\cite{1993.Chen} proved 35 years ago that this $\liminf$ is finite; our contribution is providing the explicit constant above, which we hope will spur further work. In Part I, the author proved a similar result for a different generalization of Sidon set, and in the case of $h=2$ Theorem~\ref{thm:main} reduces to a special case of that result. No result similar to Theorem~\ref{thm:main} is known for odd $h$, although Jia~\cite{1994.Jia} has conjectured one.

The best complementary result is Cilleruelo's construction~\cite{2014.Cilleruelo} of a $B_h$-set $\cG$ with
  \[G(n) = n^{\sqrt{(h-1)^2+1}-(h-1)+o(1)}.\]

\begin{figure}[t]
\centering
\begin{tikzpicture}
  \begin{axis}[
      width=\textwidth,
      height=0.3333\textwidth,
      xtick={2,4,6,8,10,12,14},
      xmin=1, xmax=15,
      ymin=0, ymax=6,
      ytick={0,1,2,3,4,5,6},
      axis lines=left,
      xlabel={$h$},
      xlabel style={at={(axis description cs:1,0)}, anchor=west, xshift=2pt},
      yticklabel style={xshift=-2pt},
      legend columns=-1,
      legend style={at={(0.5,0.98)}, anchor=north, xshift=-1cm, draw=none, fill=none, font=\footnotesize, /tikz/every even column/.append style={column sep=1em}},
      legend cell align=left,
      tick label style={font=\footnotesize},
      label style={font=\footnotesize},
    ]
    \addplot[
        color=blue!65!black, mark=*, mark size=1.5pt, thick, smooth,
      ] coordinates {
        (2, 2.4022)
        (4, 2.2765)
        (6, 2.5197)
        (8, 2.8390)
        (10, 3.1851)
        (12, 3.5434)
        (14, 3.9079)
      };
    \addlegendentry{$\left(\dfrac{\pi}{\log 2}\cdot\dfrac{\Gamma(1+h/2)^2}{\Gamma(1+1/h)^{h}}\right)^{1/h}$}

    \addplot[color=blue!65!black, thick, no marks, forget plot] coordinates {
        (14, 3.9079)
        (15, 4.0919)
      };

    \addplot[color=gray!55!black, dashed, thick, domain=9:15, samples=2]
      {x/(2*2.718281828459045)};
    \addlegendentry{$h/(2e)$}
  \end{axis}
\end{tikzpicture}
\caption{The constant of Theorem~\ref{thm:main}, together with its asymptote $h/(2e)$. Strangely, the minimum is at $h=4$.}
\label{fig:constant-vs-h}
\end{figure}

\subsection{History of the Problem}
Erd\H{o}s proved, and St\"ohr~\cite{1955.Stohr} recorded, that every infinite $B_2$-set satisfies
\begin{equation*}\label{eq:erdos}
   \liminf_{n\to\infty}\frac{A(n)}{\sqrt{n/\log n}} < C ,
\end{equation*}
with an unspecified absolute constant $C$; Erd\H{o}s's proof is given in~\cite{1966.Halberstam&Roth}.
In Part~I of this work~\cite{2026.Obryant-a}, the author proved the $h=2$ case of Theorem~\ref{thm:main}, and that if $\cG$ is a $g$-Golomb ruler (no $d$ arises more than $g$ times as a difference of elements of $\cG$, e.g., a Sidon set is a $1$-Golomb ruler), then
\begin{equation*}\label{eq:g-Golomb}
  \liminf_{n\to\infty}\frac{G(n)}{\sqrt{n/\log n}} \le 
  \sqrt{\frac{4g}{\log 2}}.
\end{equation*}

In a series of papers culminating in \cite{1993.Chen}, Nash~\cite{1989.Nash}, Jia~\citelist{\cite{1989.Jia} \cite{1994.Jia}}, Helm~\citelist{\cite{1993.Helm} \cite{1994.Helm}}, and finally Chen~\citelist{\cite{1993.Chen} \cite{1996.Chen}} showed that for every even integer $h$ and every $B_h$-set $\cA$,
  \[\liminf_{n\to\infty} \frac{A(n)}{\sqrt[h]{n/\log n}} < \infty.\]

\subsection{A brief description of our improvement}
Our own work follows Jia~\cite{1994.Jia}, with substantials detours for the sake of lowering the constant.

Following Erd\H{o}s, all previous authors had considered how a $B_h$-set intersects $[(\ell-1)N,\ell N)$, for $\ell\in \{1,\dots,N\}$, with $N\to \infty$. In~\cite{2026.Obryant-a}, the author found advantage in separating the width of the intervals and the number of intervals while studying $g$-Golomb rulers, and also averaged over shifts of the intervals. While the balance between number and width is different in this work, the idea originates in that work. Similarly, we reuse the weighted Cauchy's Inequality from that work. 

Lemma~\ref{lem:simplex} is new, giving a nontrivial lower bound on the size of a $k$-fold sumset of a $B_{2k}$-set $\cA$ under a lower bound hypothesis $A(n)$. 

All of the best work on $B_h$-sets proceeds by considering the differences of the $(h/2)$-fold sumset of $\cA$. 
This is why our result benefits from the hypothesis that $h$ is even, for example. 
Every $B_h$-set is also a $B_{h-1}$-set, so that our result extends to odd $h$, but only in that artifical manner. It is not known if
  \[\liminf_{n\to\infty} \frac{A(n)}{n^{1/h}} = 0 \]
for odd $h\ge 3$.

\section{Multiset notation and terminology}
In this work, a multiset $\beta$ is a function $\mult_\beta:\NN\to \NN$. 
We write $x\in \beta$ for $\mult_\beta(x) \ge 1$, and the cardinality and sum of a multiset is
\begin{align*}
  |\beta| & \coloneqq \sum_{x\in\NN} \mult_\beta(x) \\
  \Sigma \beta & \coloneqq \sum_{x\in \NN} \mult_\beta(x) \cdot x .
\end{align*}
The multiset $\emptyset$ has $\mult_\emptyset(x) = 0$ for all $x$.

The support of a multiset $\beta$ is the \emph{set} of integers with multiplicity at least $1$:
  \[\supp(\beta) \coloneqq \{ a : \mult_\beta(a)\ge 1\}.\]
We define the multiset intersection $\beta \Cap \delta$ and sum $\beta \uplus \delta$ through the $\mult$ function:
\begin{align*}
  x \in \beta &\qquad \Leftrightarrow\qquad \mult_\beta(x) \ge 1 \\
  U = \beta \Cap \delta &\qquad  \Leftrightarrow\qquad  \mult_U(x) = \min\{\mult_\beta(x),\mult_\delta(x)\} \\
  U = \beta \uplus \delta &\qquad  \Leftrightarrow\qquad  \mult_U(x) = \mult_\beta(x)+\mult_\delta(x) 
\end{align*}  
The usual set operations $\in, \cup, \cap$ apply to the supports of their arguments, e.g., $\beta \cap \delta \coloneqq \supp(\beta) \cap \supp(\delta)$ and $x\in \beta$ means $x\in \supp(\beta)$. We say that $\beta,\delta$ are \emph{disjoint} if $\beta\cap\delta = \emptyset$.

The set of all multisubsets with support contained in the set $X$ with cardinality $i$ is denoted $\multibinom{X}{i}$ (read ``$X$ multichoose $i$''), and by stars-and-bars we have the count
  \[\abs*{\multibinom{X}{i}}= \multibinom{\abs{X}}{i} = \binom{|X|+i-1}{i}.\]
We will use both sides of the standard bounds
  \[ \frac{|X|^i}{i!}\le \abs*{\multibinom{X}{i}} \le \frac{(|X|+i)^i}{i!}.\]

We write $\multi{a_1,\dots,a_i}$ for the multiset $\beta$ 

This next lemma is used frequently\footnote{Lemma~\ref{lem:cancel} has the same essence as the ubiquitous fact from elementary number theory: if $p,q$ are positive integers and $\gcd(p,q)=\gcd(p',q')=1$ and $pq'=p'q$, then $p=p'$ and $q=q'$.} to connect sumsets and difference sets of a $B_h$-set.
\begin{lem}\label{lem:cancel}
Let $P,P',Q,Q'$ be multisets with $P\cap Q=P'\cap Q'=\emptyset$.  If $P\uplus Q'=P'\uplus Q$, then $P=P'$ and $Q=Q'$.
\end{lem}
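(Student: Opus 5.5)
The plan is to argue pointwise on the multiplicity functions. Everything follows from the identity $\mult_P(x)+\mult_{Q'}(x)=\mult_{P'}(x)+\mult_Q(x)$, which holds for every $x\in\NN$ by the definition of $\uplus$. We then use the disjointness hypotheses to say that at each $x$ at least one of $\mult_P(x),\mult_Q(x)$ is zero, and at least one of $\mult_{P'}(x),\mult_{Q'}(x)$ is zero. It suffices to show $\mult_P(x)=\mult_{P'}(x)$ for every $x$. Once that holds, the identity gives $\mult_{Q'}(x)=\mult_Q(x)$ for every $x$, and two multisets with the same multiplicity function are equal.

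Fix $x$ and split into cases according to which multiplicities vanish.

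\emph{Case $\mult_P(x)=0$ and $\mult_{P'}(x)=0$.} Then $\mult_P(x)=\mult_{P'}(x)$ trivially, and the identity reads $\mult_{Q'}(x)=\mult_Q(x)$.

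\emph{Case $\mult_P(x)=0$ and $\mult_{Q'}(x)=0$.} The identity becomes $0=\mult_{P'}(x)+\mult_Q(x)$. Since multiplicities are nonnegative, both terms are $0$, so all four multiplicities vanish at $x$.

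\emph{Case $\mult_Q(x)=0$ and $\mult_{P'}(x)=0$.} This is symmetric to the previous case: the identity gives $\mult_P(x)+\mult_{Q'}(x)=0$, so again all four vanish.

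\emph{Case $\mult_Q(x)=0$ and $\mult_{Q'}(x)=0$.} The identity reads $\mult_P(x)=\mult_{P'}(x)$ directly.

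In every case $\mult_P(x)=\mult_{P'}(x)$ and $\mult_Q(x)=\mult_{Q'}(x)$. Since $x$ was arbitrary, $P=P'$ and $Q=Q'$.

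There is no real obstacle here. The only point needing care is that nonnegativity of multiplicities is what lets the two mixed cases force everything to zero. This argument uses disjointness only in the support sense, namely $\beta\cap\delta=\supp(\beta)\cap\supp(\delta)$, so at each $x$ one of the two multiplicities is zero.
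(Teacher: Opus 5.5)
Your argument is correct and complete. The four cases exhaust what the two disjointness hypotheses allow at each $x$, and nonnegativity of multiplicities correctly forces everything to vanish in the two mixed cases. The paper gives no proof of this lemma beyond the footnote's analogy with reduced fractions satisfying $pq'=p'q$. Your pointwise comparison of $\mult$ functions is the multiset form of the prime-by-prime (valuation) proof of that analogue, so it is the natural argument here.
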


A quick example using multisets is enlightening, if ornate.
\begin{lem}\label{lem:B_h is B_p}
  Suppose that $\cA$ is a $B_h$-set and $1\le p \le h$. Then $\cA$ is a $B_p$-set. 
\end{lem}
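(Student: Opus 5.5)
The plan is to pad both sides of a hypothetical coincidence of $p$-fold sums with a common multiset of size $h-p$, which turns it into a coincidence of $h$-fold sums and lets the $B_h$ property do the work. Recast the definition of $B_p$ in multiset language: $\cA$ is a $B_p$-set exactly when, for $\beta,\delta\in\multibinom{\cA}{p}$, $\Sigma\beta=\Sigma\delta$ forces $\beta=\delta$. In the same language, the hypothesis says that for $\beta',\delta'\in\multibinom{\cA}{h}$, $\Sigma\beta'=\Sigma\delta'$ implies $\beta'=\delta'$.

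Fix $\beta,\delta\in\multibinom{\cA}{p}$ with $\Sigma\beta=\Sigma\delta$, and assume $\cA$ is nonempty (otherwise there is nothing to prove). Choose any $a\in\cA$ and let $\gamma$ be the multiset consisting of $a$ with multiplicity $h-p$, so $\mult_\gamma(a)=h-p$ and $\mult_\gamma(x)=0$ for $x\ne a$. The case $p=h$ is trivial, with $\gamma=\emptyset$.

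Now $\beta\uplus\gamma$ and $\delta\uplus\gamma$ both lie in $\multibinom{\cA}{h}$, since cardinalities add under $\uplus$ and supports stay inside $\cA$. Their sums agree, because $\Sigma$ is additive over $\uplus$:
\[
\Sigma(\beta\uplus\gamma)=\Sigma\beta+(h-p)a=\Sigma\delta+(h-p)a=\Sigma(\delta\uplus\gamma).
\]
The $B_h$ hypothesis then gives $\beta\uplus\gamma=\delta\uplus\gamma$. Comparing multiplicity functions pointwise, $\mult_\beta(x)+\mult_\gamma(x)=\mult_\delta(x)+\mult_\gamma(x)$ for every $x$, so $\mult_\beta=\mult_\delta$ and $\beta=\delta$.

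Alternatively, in the spirit of the paper's remark, one could derive the cancellation from Lemma~\ref{lem:cancel}. The direct multiplicity comparison is simpler, though, and needs no disjointness hypotheses.

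No step here is a genuine obstacle. The only care needed is in the bookkeeping: checking that $\uplus$ adds both cardinalities and sums, and that $\gamma$ has support in $\cA$, which is why $\cA\neq\emptyset$ is needed.
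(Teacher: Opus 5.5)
Your proof is correct and follows the paper's argument: pad both $p$-fold sums with $h-p$ copies of a fixed $a\in\cA$, then apply the $B_h$ property to the resulting multisets of size $h$. The differences are only cosmetic. You argue directly and spell out the pointwise cancellation of multiplicities, while the paper phrases the same step as a contradiction with $\beta\neq\delta$.
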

\begin{proof}
If $\cA=\emptyset$ or $p=h$ or $p=1$, then the lemma is trivial , but true. 

Otherwise, suppose by way of contradiction that $a\in\cA$ and $\beta,\delta \in \multibinom{\cA}{p}$ with $\beta\neq \delta$ yet $\Sigma \beta = \Sigma \delta$. Let $\alpha$ be the multiset containing only $a$, and containing it with multiplicity $h-p$. Then
  \[\Sigma (\alpha \uplus \beta) = \Sigma(\alpha \uplus \delta),
  \qquad |\alpha \uplus \beta| = |\alpha \uplus \delta| =h.\]
By the $B_h$-property, $\alpha \uplus \beta = \alpha \uplus \delta$, contradicting $\beta \neq \delta$.
\end{proof}

If $\cA$ is a $B_h$-set, and $\beta,S \in \multibinom{\cA}{h}$ and $\Sigma \beta = \Sigma S$, then $\beta=S$. 
This is just a restatement of the $B_h$-property.
In other words, by the $B_h$-property (which implies the $B_p$ property for $1\le p\le h$), the elements of the $p$-fold sumset $p\cA$ are in bijective correspondence with $\multibinom{\cA}{p}$.

\begin{cor}\label{cor:Vs}
Let $\cA$ be a $B_h$-set and $1\le p\le h$. The map $\beta\mapsto\Sigma\beta$
is a bijection from $\multibinom{\cA}{p}$ onto the $p$-fold sumset $p\cA$.
\end{cor}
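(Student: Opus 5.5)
The plan is to verify the two properties of a bijection directly: first that the map lands in the $p$-fold sumset and hits everything, then that it is injective. For $p=h$ this is exactly the $B_h$-property as just restated. For smaller $p$ we reduce to that case using Lemma~\ref{lem:B_h is B_p}.

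\textbf{Well-definedness and surjectivity.} By definition, the $p$-fold sumset is
\[
p\cA=\{a_1+\cdots+a_p : a_i\in\cA\}.
\]
Given $\beta\in\multibinom{\cA}{p}$, list its elements with multiplicity as $a_1\le\cdots\le a_p$. Each $a_i$ lies in $\cA$ and $\Sigma\beta=a_1+\cdots+a_p$, so $\Sigma\beta\in p\cA$ and the map is well defined. Conversely, take any $s=a_1+\cdots+a_p\in p\cA$ with $a_i\in\cA$. Let $\beta$ be the multiset with $\mult_\beta(x)=\#\{i : a_i=x\}$. Then $\supp(\beta)\subseteq\cA$, $|\beta|=p$ and $\Sigma\beta=s$, which gives surjectivity.

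\textbf{Injectivity.} By Lemma~\ref{lem:B_h is B_p}, $\cA$ is a $B_p$-set, so distinct nondecreasing $p$-tuples from $\cA$ have distinct sums. Take $\beta,\delta\in\multibinom{\cA}{p}$ with $\Sigma\beta=\Sigma\delta$, and write them as nondecreasing tuples. The $B_p$-property forces the two tuples to coincide. A multiset of size $p$ is determined by its nondecreasing listing, so $\beta=\delta$.

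There is no real obstacle here. The only point needing care is the dictionary between multisets with support in $\cA$ and nondecreasing $p$-tuples from $\cA$. That correspondence is immediate from the definition of $\mult_\beta$, and once it is in place the corollary is a restatement of Lemma~\ref{lem:B_h is B_p}.
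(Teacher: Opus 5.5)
Your proposal is correct and follows the paper's own reasoning. The paper treats the corollary as immediate from Lemma~\ref{lem:B_h is B_p}: injectivity is the $B_p$-property restated in multiset language, and surjectivity holds by the definition of $p\cA$; your explicit check of the dictionary between multisets and nondecreasing $p$-tuples simply spells out what the paper leaves implicit.
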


For $s\in p\cA$ we write $V_s$ for the unique preimage (specifying $p$ in context), so $\Sigma V_s=s$.

\section{Lemmata on $B_h$-sets}
We will use calligraphy letters for sets of nonnegative integers, a subscript for the truncation of the set, and the corresponding capital latin letter for the counting functions. To wit, for $\cS=k\cA$, the $k$-fold sumset of $\cA$, we have $\cS_W = \cS\cap[0,W)$ and
  \[S(W) = \abs*{\cS_W} = \abs*{\cS \cap [0,W)}.\]

Let $\cS = k\cA$, with counting function $S(n)$. A $B_2$-set not only has distinct sums, it also has distinct differences. While the set $\cS$ is \emph{not} a $B_2$-set, we are able to control its differences, and that control drives our bound on the ``energy'' of $\cS$ relative to a partition of $\NN$.

Our first result connects an ``energy'' quantity (sum of squares) to the $\liminf$. 

\begin{lem}\label{lem:energy}
Let $\cB\subseteq\NN$ have counting function $B(n)$.
Let $M=M(N)$ satisfy, as $N\to\infty$,
\begin{enumerate}[label=\textit{(\roman*)},noitemsep]
  \item\label{eng:window} $B((M+1)N)=o(N)$;
  \item\label{eng:ratio} $\log M/\log N =1+o( 1 )$;
  \item\label{eng:head} $B(N)=o(\sqrt{N \log N})$.
\end{enumerate}
Suppose there is a constant $c$ such that there is an
offset $t^\ast=t^\ast(N)\in[0,N)$ whose block counts
  \begin{equation*}\label{hyp:blocks}
    F_\ell \coloneqq B(t^\ast+\ell N)- B(t^\ast+(\ell-1)N)
  \end{equation*}
satisfy (as $N\to\infty$)
  \begin{equation}\label{hyp:energy}
    \sum_{\ell=1}^{M}\binom{F_\ell}{2}\le cN+o(N).
  \end{equation}
Then
  \[\liminf_{m\to\infty}\frac{B(m)}{\sqrt{m/\log m}}\le \sqrt{\frac{8c}{\log 2}}.\]
\end{lem}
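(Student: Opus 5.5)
We argue by contradiction. Suppose the conclusion fails. Then there is $\lambda$ with $\lambda^2>8c/\log 2$ and $B(m)\ge \lambda\sqrt{m/\log m}$ for all sufficiently large $m$. The plan has three steps. First, turn \eqref{hyp:energy} into an upper bound on $\sum F_\ell^2$. Second, apply a weighted Cauchy inequality to get a lower bound on $\sum F_\ell^2$ in terms of a weighted sum $\sum F_\ell w_\ell$. Third, bound that weighted sum from below by partial summation using the assumed lower bound on $B$. The weights are chosen to optimize the resulting constant.

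\emph{Step 1.} Since $\binom{F}{2}=(F^2-F)/2$ and $\sum_{\ell=1}^M F_\ell = B(t^\ast+MN)-B(t^\ast)\le B((M+1)N)=o(N)$ by \ref{eng:window}, hypothesis \eqref{hyp:energy} gives
\[
\sum_{\ell=1}^M F_\ell^2\le 2cN+o(N).
\]

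\emph{Step 2.} Put $u_\ell=\log\ell/\log N$, which lies in $[0,1+o(1)]$ for $\ell\le M$ by \ref{eng:ratio}. Take the weights
\[
w_\ell=\ell^{-1/2}(1+u_\ell)^{-1/2},
\]
which are positive and decreasing in $\ell$. Cauchy's inequality gives $\sum F_\ell^2\ge (\sum F_\ell w_\ell)^2/\sum w_\ell^2$. Comparing with an integral (substitute $\ell=N^u$, so $d\ell/\ell=\log N\,du$) gives
\[
\sum_{\ell\le M} w_\ell^2=\sum_{\ell\le M}\frac{1}{\ell(1+u_\ell)} = (1+o(1))\log N\int_0^1\frac{du}{1+u}=(1+o(1))\log 2\,\log N.
\]

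\emph{Step 3.} Abel summation gives
\[
\sum_{\ell=1}^M F_\ell w_\ell = B(t^\ast+MN)\,w_M - B(t^\ast)\,w_1+\sum_{\ell=1}^{M-1}B(t^\ast+\ell N)\,(w_\ell-w_{\ell+1}).
\]
The first term is nonnegative. The second is at least $-B(N)=o(\sqrt{N\log N})$ by \ref{eng:head}, since $t^\ast<N$. In the last sum the differences $w_\ell-w_{\ell+1}$ are nonnegative, so we may use $B(t^\ast+\ell N)\ge B(\ell N)\ge \lambda\sqrt{\ell N/\log(\ell N)}$ there. Write $\log(\ell N)=\log N\,(1+u_\ell)$ and $w_\ell-w_{\ell+1}\approx \tfrac12\ell^{-3/2}(1+u_\ell)^{-1/2}$, where the $u$-derivative contributes only a relative $O(1/\log N)$ correction. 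The last sum is then at least
\[
(1+o(1))\,\frac{\lambda}{2}\sqrt{\frac{N}{\log N}}\sum_{\ell}\frac{1}{\ell\,(1+u_\ell)} = (1+o(1))\,\frac{\lambda}{2}\log 2\,\sqrt{N\log N}.
\]
Only a set of $\ell$ whose contribution is negligible needs to be discarded (those $\ell$ too small for the lower bound on $B$ to hold).

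\emph{Conclusion.} Combining the three steps,
\[
2cN+o(N)\ge \frac{\lambda^2(\log 2)^2 N\log N/4}{\log 2\,\log N}(1+o(1))=\frac{\lambda^2\log 2}{4}N(1+o(1)),
\]
so $\lambda^2\le 8c/\log 2$, a contradiction.

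\emph{Main obstacle.} The delicate step is the asymptotic bookkeeping in Step 3. We must check three things. First, the error from the discrete differences $w_\ell-w_{\ell+1}$ and from the integral comparison is uniformly $o(\sqrt{N\log N})$. Second, the range $\ell\le M$ with $\log M\sim\log N$ really makes the $u$-integral run over all of $[0,1]$. Third, the small-$\ell$ and large-$\ell$ ends of the range contribute negligibly. The choice $\varphi(u)=(1+u)^{-1/2}$ is the Cauchy-optimal weight for the profile $1/\sqrt{1+u}$, and this is where the factor $\log 2$ comes from; any weaker choice, such as $\varphi\equiv1$, only gives a worse constant.
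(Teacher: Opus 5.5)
Your proof is correct, and it follows the weighted-Cauchy route that the paper indicates it imports from Part~I (the lemma is stated here without proof). You convert $\sum\binom{F_\ell}{2}$ into $\sum F_\ell^2\le 2cN+o(N)$ via (i), apply Cauchy--Schwarz with weights proportional to $(\ell\log(\ell N))^{-1/2}$ (squares summing to $(1+o(1))\log 2\cdot\log N$ by (ii)), and bound $\sum F_\ell w_\ell$ below by Abel summation with (iii) absorbing the boundary term. The bookkeeping is routine: the $u$-derivative part of $w_\ell-w_{\ell+1}$ is positive, so it can be dropped, and since $\ell N\ge N$ no small $\ell$ need be discarded once $N$ is large.
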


We begin with the simple bound on the size of a $B_h$-set in $[0,m)$. We remark that there is a large literature around improving the constant in Lemma~\ref{lem:easy Bh bound}. For $h>2$, the correct constant is unknown, and for $h=2$ the correct error term is unknown. For the purposes of this work, surprisingly, Lemma~\ref{lem:easy Bh bound} suffices.

\begin{lem}\label{lem:easy Bh bound}
  If $\cG$ is a $B_h$-set and $m\ge 1$, then $G(m) \le (h\cdot h!)^{1/h} \cdot m^{1/h}$.
\end{lem}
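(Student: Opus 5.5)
The plan is the standard counting argument. Set $g = G(m)$ and compare the number of $h$-element multisubsets of $\cG_m = \cG\cap[0,m)$ with the number of possible values of their sums.

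First, by Corollary~\ref{cor:Vs} with $p=h$, the map $\beta\mapsto\Sigma\beta$ is injective on $\multibinom{\cG_m}{h}$. Every such $\beta$ has $h$ elements (with multiplicity), each in $[0,m-1]$. Hence $\Sigma\beta$ is an integer in $[0,h(m-1)]$, and the number of distinct sums is at most $h(m-1)+1 \le hm$; the last step uses $h\ge 1$ and $m\ge1$. Combining this with the lower bound $\abs*{\multibinom{X}{i}}\ge |X|^i/i!$ quoted in the multiset section gives
\[
  \frac{g^h}{h!} \le \abs*{\multibinom{\cG_m}{h}} \le hm .
\]
Rearranging yields $g^h \le h\cdot h!\cdot m$, that is, $G(m)\le (h\cdot h!)^{1/h} m^{1/h}$, which is the claimed bound.

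There is no real obstacle here. The only points needing care are these:
\begin{itemize}
  \item the lower bound $|X|^i/i!$ on the multichoose count is valid also when $g=0$, where the claim is trivial anyway;
  \item the sumset of $h$ elements of $[0,m)$ lies in an interval containing at most $hm$ integers, which is a slightly lossy but sufficient estimate.
\end{itemize}
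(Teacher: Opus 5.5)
Your proof is correct and is essentially the paper's argument. The paper counts the $G(m)^h$ ordered $h$-tuples from $\cG_m$ and notes that, by the $B_h$-property, each value in $[0,mh)$ is the sum of at most $h!$ of them. This is the same as your injectivity of $\beta\mapsto\Sigma\beta$ on $\multibinom{\cG_m}{h}$ combined with the bound $g^h/h!\le\abs*{\multibinom{\cG_m}{h}}$.
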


\begin{proof}
  There are $G(m)^h$ tuples of numbers (each with $h$ entries) in $\cG_m$, each tuple has sum in $[0,mh)$, and by the $B_h$-property each number in $[0,mh)$ can be the sum of at most $h!$ different tuples. Thus,
  \(\frac{G(m)^h}{h!} \le mh.\)
\end{proof}

In particular,
  \[ A(n) = O(n^{1/h}).\]
We also have some immediate bounds from the multiset characterization of the $B_h$-property in Corollary~\ref{cor:Vs}. For example, we have $k\cA_n \subseteq[0,kn)$, so that
  \begin{equation}\label{eq:S(kn) > A(n)^k / k!}
    S(kn) \ge \abs*{\multibinom{\cA_n}{k}} = \binom{A(n)+k-1}{k} \ge \frac{A(n)^k}{k!}.
  \end{equation}
We work harder to produce a better bound in Lemma~\ref{lem:simplex} below.
\begin{lem}[Lower bound on $S(m)$]\label{lem:simplex}
Let $h=2k$ be even, let $\cA$ be a $B_h$-set, and suppose there are $\tau>0$
and $n_0\ge e$ such that
  \[A(x) \ge \tau\left(\frac{x}{\log x}\right)^{1/h}
    \qquad\text{for all } x> n_0.\]
Then $\cS=k\cA$ satisfies,
  \[\liminf_{m\to\infty} \frac{S(m)}{\sqrt{m/\log m}} \ge \frac{2\tau^k \Gamma(1+\tfrac1h)^k}{k!\sqrt{\pi}}.\]
\end{lem}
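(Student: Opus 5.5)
By Corollary~\ref{cor:Vs} (with $p=k$), $S(m)$ equals the number of multisets $\beta\in\multibinom{\cA}{k}$ with $\Sigma\beta<m$. I would bound this below by counting \emph{ordered} $k$-tuples $(a_1,\dots,a_k)\in\cA^k$ with $a_1+\cdots+a_k<m$. Call this count $T_k(m)$. Every multiset arises from at most $k!$ ordered tuples, and from exactly $k!$ when its entries are distinct. Tuples with a repeated entry number at most $\binom k2 T_{k-1}(m)$, and $T_{k-1}(m)\le A(m)^{k-1}=O(m^{(k-1)/h})=o(\sqrt{m/\log m})$ by Lemma~\ref{lem:easy Bh bound}. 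Hence
\[
S(m)\ \ge\ \frac{T_k(m)}{k!}-o\bigl(\sqrt{m/\log m}\bigr),
\]
and it suffices to prove $T_k(m)\ge (1-o(1))\,\tau^k\Gamma(1+\tfrac1h)^k\,\Gamma(1+\tfrac kh)^{-1}\,(m/\log m)^{k/h}$. Since $k/h=\tfrac12$ and $\Gamma(\tfrac32)=\sqrt\pi/2$, this gives exactly the stated constant.

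Fix a large $m$ and set $L=\log m$. For $n_0<x\le m$ we have $\log x\le L$, so $A(x)\ge \tau (x/L)^{1/h}$. For $x\le n_0$ the trivial bound $A(x)\ge0$ loses at most a constant. So uniformly on $[0,m]$,
\[
A(x)\ \ge\ F(x)\coloneqq \tau (x/L)^{1/h}-C_0
\]
for a constant $C_0$ independent of $m$. The point of this step is that the $\log$ is frozen at $\log m$, which turns the problem into a pure power-law computation.

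Next I would set up an induction on $j$ via Stieltjes integration, using the recursion
\[
T_j(y)=\sum_{a\in\cA,\,a<y}T_{j-1}(y-a)=\int_{[0,y)} g(x)\,dA(x),\qquad g(x)=T_{j-1}(y-x).
\]
Here $g$ is nonincreasing and nonnegative. Integrating by parts expresses this as $\int A\,d(-g)$ plus a boundary term. Since $-g$ is nondecreasing, $A\ge F$ may be substituted, and integrating back by parts gives $T_j(y)\ge\int_0^y T_{j-1}(y-x)\,dF(x)-O(\text{boundary})$. The inductive hypothesis is
\[
T_{j-1}(z)\ge c_{j-1}\,(z/L)^{(j-1)/h}-E_{j-1}(z),\qquad c_j=\tau^j\frac{\Gamma(1+\tfrac1h)^j}{\Gamma(1+\tfrac jh)},
\]
with $E_{j-1}$ of strictly lower order in $z$. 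To replace $T_{j-1}$ by this continuous minorant inside the integral, I would minorize $T_{j-1}$ by the positive part of that expression, which is still nondecreasing. The main term then becomes a Beta integral, $\int_0^y (y-x)^{(j-1)/h}\,d(x^{1/h})=y^{j/h}\,\Gamma(1+\tfrac{j-1}h)\Gamma(1+\tfrac1h)/\Gamma(1+\tfrac jh)$, which reproduces $c_j$ exactly.

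I expect the main obstacle to be the bookkeeping of error terms through the induction. The $-C_0$ in $F$, the positive-part truncation, and the boundary terms must each contribute only $O((y/L)^{(j-1)/h})$ or $O(y^{(j-1)/h})$ at stage $j$, so that at $j=k$ they are $O(m^{(k-1)/h})=o(\sqrt{m/\log m})$. I would first try to carry an explicit error of the form $E_j(y)=D_j\,(1+y^{(j-1)/h})$ and verify it by bounding $\int_0^y E_{j-1}(y-x)\,dF(x)$ together with $C_0\,T_{j-1}(y)$, the latter via $T_{j-1}(y)\le A(y)^{j-1}$. If that becomes messy, the fallback is an $\varepsilon$-version: apply the lower bound $A(x)\ge(1-\varepsilon)\tau(x/L)^{1/h}$ only for $x\ge\varepsilon m$ and discard small $x$ at a cost of $O(\varepsilon^{1/h})$ in the constant, then let $\varepsilon\to0$.
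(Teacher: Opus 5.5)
Your argument is correct, and it takes a genuinely different route from the paper. The paper works with indices. It sets $x_j=f^{-1}(j)$ for $f(x)=\tau(x/\log x)^{1/h}$ and uses $a_j<x_j$, so each index tuple with $\sum x_{j_i}\le m$ gives a multiset with sum $<m$. It then turns this lattice-point count into a volume and substitutes $u_i=f^{-1}(y_i+1)$. The logarithm is frozen only inside $f'(u)$, using $u\le m$ and $u>u_0\ge\log m$. Finally it evaluates the whole $k$-dimensional Dirichlet integral in one stroke, paying a factor $(1-1/\log\log m)^k$ and a truncation loss near the coordinate hyperplanes.

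You instead freeze the logarithm once, in the hypothesis itself, which gives the pure power-law minorant $A\ge \tau(x/\log m)^{1/h}-C_0$ on $[0,m]$. You then build the ordered-tuple count by $k$ one-dimensional convolutions, each a Beta integral. The product of Beta factors telescopes to $\Gamma(1+\tfrac1h)^k/\Gamma(1+\tfrac kh)$, which is exactly the Dirichlet integral computed iteratively.

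What your route buys: no $f^{-1}$, no cutoff $J_0$, no lattice-to-volume comparison, and losses of only $O(m^{(k-1)/h})$. What it costs: an induction with explicit error bookkeeping, and an a priori upper bound $A(y)=O(y^{1/h})$ from Lemma~\ref{lem:easy Bh bound} to absorb the boundary term $C_0T_{j-1}(y)$. The paper's argument uses no upper bound on $A$.

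Three simplifications you can make:
\begin{itemize}
\item The repeated-entry correction is unnecessary. Each multiset has at most $k!$ orderings, so $S(m)\ge T_k(m)/k!$ holds exactly.
\item The integration-by-parts step is cleanest as a layer-cake argument. Since $g$ is nonincreasing, every superlevel set $\{g>t\}$ is an initial segment of $[0,y)$. So $A\ge F$ gives $\sum_{a<y}g(a)\ge\int_{[0,y)}g\,dF-C_0\,g(0)$ directly, with $g(0)=T_{j-1}(y)$ and no other boundary terms.
\item After that step you integrate against the positive measure $dF$. A pointwise minorant of $T_{j-1}$ therefore suffices, and the positive-part and monotonicity device is not needed.
\end{itemize}
Your first-choice error shape $E_j(y)=D_j(1+y^{(j-1)/h})$ does close the induction with constants independent of $m$, so the $\varepsilon$ fallback is not needed.
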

 
\begin{proof}
Set $f(x) \coloneqq \tau(x/\log x)^{1/h}$. Since
  \[\frac{f'(x)}{f(x)} = \frac{1}{hx}\left(1-\frac{1}{\log x}\right),\]
$f$ is continuous, strictly increasing, and unbounded on $[e,\infty)$; we assume that $n_0\ge e$. 
In particular $A(x)\ge f(x)$ for $x> n_0$, so $\cA$ is infinite; write 
  \[\cA = \{0\le a_1 < a_2<\cdots\}.\]
For integers $j\ge J_0\coloneqq \floor{f(n_0)}+1$, let $x_j$ be the unique solution to $f(x_j)=j$. The schematic in Figure~\ref{fig:x and y} illustrates these relationships.

\begin{figure}[t]
\begin{center}
\resizebox{\textwidth}{!}{%
\begin{tikzpicture}[>={Stealth[length=2.5mm]}, x=0.82cm, y=0.92cm]

  \tikzset{
    closed/.style={red!70!black, fill=red!70!black},
    open/.style  ={red!70!black, fill=white, line width=0.6pt}}

  \draw[->] (-0.2,0) -- (10.4,0) node[right] {$x$};
  \draw[->] (0,-0.2) -- (0,5.7) node[above] {$y$};

  \def\aj{3.0}    
  \def\xj{6.25}   
  \def\jj{4}      

  \draw[dashed] (0,\jj) -- (\xj,\jj);
  \draw[dashed] (\aj,\jj) -- (\aj,0);
  \draw[dashed] (\xj,\jj) -- (\xj,0);
  \node[left] at (0,\jj) {$j$};
  \fill (0,\jj) circle (1.1pt);

  \node[left] at (0,3) {$J_0$};
  \fill (0,3) circle (1.1pt);

  \draw[dotted] (2.0,0) -- (2.0,3.2);
  \node[below, font=\footnotesize] at (2.0,-0.08) {$n_0$};

  \draw[very thick, blue!65!black, domain=0.05:9.6, samples=140, smooth]
        plot (\x, {1.6*sqrt(\x)});
  \node[blue!65!black, anchor=west] at (9.0,4.55) {$y=f(x)$};

  \def\tread#1#2#3{
    \draw[very thick, red!70!black] (#1,#3) -- (#2,#3);
    \draw[open]   (#1,#3) circle (1.7pt);
    \fill[closed] (#2,#3) circle (1.7pt);}
  \draw[very thick, red!70!black] (0,0) -- (0.5,0);
  \fill[closed] (0.5,0) circle (1.7pt);
  \tread{0.5}{1.3}{1}   
  \tread{1.3}{2.0}{2}   
  \tread{2.0}{3.0}{3}   
  \tread{3.0}{5.8}{4}   
  \draw[very thick, red!70!black] (5.8,5) -- (9.6,5);
  \draw[open] (5.8,5) circle (1.7pt);
  \node[red!70!black, anchor=west] at (6.5,5.25) {$y=A(x)$};

  \draw[->, gray!55!black] (2.0,5.6) -- (9.6,5.6);
  \node[gray!45!black, font=\footnotesize, anchor=south] at (5.7,5.6)
        {$A(x)\ge f(x)\ \ (x> n_0)$};

  \fill[blue!65!black] (\xj,\jj) circle (2pt);  
  \node[blue!65!black, anchor=west, font=\footnotesize]
        at (\xj+0.15,\jj-0.25) {$x_j=f^{-1}(j)$};

  \node[below] at (\aj,-0.05) {$a_j$};
  \node[below] at (\xj,-0.05) {$x_j$};
  \draw[decorate,decoration={brace,amplitude=4pt,mirror}]
        (\aj,-0.6) -- (\xj,-0.6)
        node[midway,below=3pt,font=\footnotesize] {$a_j<x_j$};

  \node[align=center, font=\footnotesize, gray!40!black]
        at (8.4,2.0)
        {$A(x_j)\ge f(x_j)=j$\\[1pt]$\Rightarrow\ a_j<x_j$};
  \end{tikzpicture}}
\end{center}
\caption{Schematic showing the relationships $A(x)\ge f(x)$ for $x> n_0$, $f(x_j)=j$, and $a_j < x_j$.}
\label{fig:x and y}
\end{figure}

Moreover, since $A(x_j) \ge f(x_j)=j$ for $j\ge J_0$, we see that \(a_j < x_j \qquad (j\ge J_0).\)
Suppose $J_0\le j_1\le\cdots\le j_k$ are integers with $x_{j_1}+\cdots+x_{j_k}\le m$. The multiset $\beta\coloneqq \multi{a_{j_1},\dots,a_{j_k}}\in\multibinom{\cA}{k}$ has $\Sigma\beta=\sum a_{j_i}< \sum x_{j_i} <m$. Also, distinct $\beta$ produce distinct elements of $\cS\cap[0,m)$ by Corollary~\ref{cor:Vs}. 
Hence
  \begin{align*}
    S(m) 
    &\ge \#\left\{(j_1,\dots,j_k) : J_0\le j_1\le\cdots\le j_k,
        \sum_{i=1}^k x_{j_i}\le m\right\} \\
    &\ge \frac{1}{k!} \#\left\{(j_1,\dots,j_k)\in\{J_0,J_0+1,\dots\}^k :
    \sum_{i=1}^k x_{j_i}\le m\right\} \\
    &=: \frac{1}{k!} N^*(m).
  \end{align*}
We have successfully transformed bounding $S(m)$, a number theory problem, with counting the lattice points in a region of $\RR^k$. Unsurprisingly, we proceed by replacing the lattice point count with an integral, a Dirichlet integral, and will arrive at (for $m\ge \exp(f^{-1}(J_0)+1)$)
  \begin{equation*}
    N^{*}(m) \ge \frac{\tau^k}{h^k(\log m)^{1/2}} \left( 1-\frac{1}{\log \log m} \right)^k
  \left(\frac{2\Gamma(\tfrac1h)^k}{\sqrt{\pi}}\sqrt m - k h^k \left(\frac{\log m}{m}\right)^{1/h}\sqrt{m}\right),
  \end{equation*}
from which Lemma~\ref{lem:simplex} follows by routine asymptotic analysis.

Each integer tuple $(j_1,\dots,j_k)$ corresponds to the unit box $\prod_{i}(j_i-1,j_i]$, and on that box $\lceil y_i\rceil=j_i$. 
Therefore
  \[N^{*}(m) = \operatorname{vol}\left\{(y_1,\dots,y_k) \in (J_0-1,\infty)^k :
     \sum_{i=1}^k x_{\lceil y_i\rceil}\le m\right\}.\]
The function $f$ has an inverse on $[e,\infty)$ that is also continuous and strictly increasing. 
For $\ceiling{y_i}\ge J_0$, we have $f^{-1}(y_i+1)\ge f^{-1}(\ceiling{y_i}) = x_{\ceiling{y_i}}$.
Hence
  \begin{multline*}
    \left\{ (y_1,\dots,y_k) \in (J_0-1,\infty)^k : y_i > J_0,
     \sum_{i=1}^k f^{-1}(y_i+1) \le m\right\}\\
     \subseteq
     \left\{(y_1,\dots,y_k) \in (J_0-1,\infty)^k :
     \sum_{i=1}^k x_{\lceil y_i\rceil}\le m\right\}.
  \end{multline*}
The substitution $u_i\coloneqq f^{-1}(y_i+1)$, with $dy_i=f'(u_i)du_i$ and
$u_0\coloneqq \max\{\log m, f^{-1}(J_0+1)\}$, gives
  \[N^{*}(m) \ge \int_{\substack{u_i>u_0\\ u_1+\cdots+u_k\le m}}
     \prod_{i=1}^k f'(u_i)\,du_i.\]

By calculus, we have for $u>u_0$
  \begin{align*}
    f'(u) 
    &= \frac{\tau}{h} \frac{u^{1/h-1}}{(\log u)^{1/h}} \left( 1-\frac{1}{\log u} \right) \\
    &\ge \frac{\tau}{h(\log m)^{1/h}} \left( 1-\frac{1}{\log u_0} \right) u^{1/h-1},
  \end{align*}
and $f'>0$.
Thus,
  \[N^{*}(m) \ge \frac{\tau^k}{h^k(\log m)^{k/h}} \left( 1-\frac{1}{\log u_0} \right)^k
  \int_{\substack{u_i>u_0\\ u_1+\cdots+u_k\le m}}
     \prod_{i=1}^k u_i^{1/h-1} \,du_i.\]

Dirichlet's integral technique~\cite{1927.Whittaker&Watson}*{page 258--9} evaluates the untruncated version:
  \[\int_{\substack{u_i>0\\ u_1+\cdots+u_k\le m}}
     \prod_{i=1}^k u_i^{1/h-1}\,du_i
     = \frac{2\Gamma(\tfrac1h)^k}{\sqrt{\pi}}\sqrt m.\]
The truncation costs little: the portion of the untruncated integral with $u_1\le u_0$ is at most
  \[\int_0^{u_0}u^{1/h-1}\,du\cdot
    \left(\int_0^m u^{1/h-1}\,du\right)^{k-1}
    = h^k u_0^{1/h} m^{(k-1)/h} = o(\sqrt{m}),\]
and by symmetry the total cost is at most $k$ times this. 
We now have
  \begin{align*}
    N^{*}(m) 
    &\ge \frac{\tau^k}{h^k(\log m)^{1/2}} \left( 1-\frac{1}{\log u_0} \right)^k
  \left(\frac{2\Gamma(\tfrac1h)^k}{\sqrt{\pi}}\sqrt m - o(\sqrt m)\right)\\
    &\le \frac{2\tau^k \Gamma(\tfrac1h)^k}{\sqrt{\pi} h^k} \frac{\sqrt{m}}{\sqrt{\log m}} \left( 1-\frac{k}{\log \log m} \right)
  \left(1 - o(1)\right)\\
  &= (1-o(1)) \frac{2\tau^k \Gamma(1+\tfrac1h)^k}{\sqrt{\pi}} \frac{\sqrt{m}}{\sqrt{\log m}}.
  \end{align*}
This is the bound reported in Lemma~\ref{lem:simplex}.
\end{proof}

We also have $\cS_n \subseteq k\cA_n$, so that
  \begin{multline}\label{eq:S(n) = O(sqrt n)}
    S(n) \le |k\cA_n| = \abs*{\multibinom{\cA_n}{k}} \\
  \le (A(n)+k-1)^k = (O(n^{1/h})+k-1)^k = O(n^{k/h})=O(n^{1/2}).
  \end{multline}

We begin now with the critical task: controlling the differences of $\cS$ to produce the upper bound on $\sum \binom{F_\ell}{2}$ required by Lemma~\ref{lem:energy}.

For nonnegative integers $r,p,q,x,L$, let
\begin{align*}
  T(p,q;x,L)
  &\coloneqq \{(\beta,\delta)\in \multibinom{\cA}{p} \times \multibinom{\cA}{q} :
     \beta\cap\delta=\emptyset,x<\Sigma\beta-\Sigma\delta<L+x\} \\
  \Phi(r,p,q;x,L)
   &\coloneqq \left\{(\vec a;\beta,\delta):
     \vec a \in (\cA_L)^r , 
     (\beta,\delta) \in T(p,q;x,L)\right\}.
\end{align*}
Note that $\beta,\delta$ are multisets taken from $\cA$, not from $\cA_L$, but $\vec a = (a_1,\dots,a_r )$ is an ordered tuple of elements from $\cA_L$, possibly with repetitions. This asymmetry is useful because the
$a_i$ range freely over $\cA_L$ and that allows us to factor nicely:
\begin{equation}\label{eq:factor Phi}
   \abs*{\Phi(r ,p,q;x,L)} = \abs*{\cA_L}^r \abs*{T(p,q;x,L)}.
\end{equation}
In this work, we will use $x=0$ only. In Part III of this series, we will use the following lemma with $x$ varying.

We write $\Sigma \vec a$ for the sum of the entries of the tuple $\vec a$.

An important insight of Jia~\cite{1994.Jia} is that one needs to bound $\abs{\Phi}$, not $|T|$ alone.
Jia gives the following lemma with the conclusion $\abs{\Phi} = O(L)$. 
We have made it quantitative to ease our concerns over which constants depend on which, and the circular reasoning such confusion can enable. 
Certainly, the specific form of $C(r,p,q)$ is not germane to our usage.

\begin{lem}[Jia]\label{lem:jia}
Let $\cA$ be a $B_{h}$-set (with $h$ not necessarily even) and let $r,p,q\ge0$ satisfy $r+p+q\le h$.  
Then for every $x \ge 0$ and $L \ge 1$,
\[
   \abs*{\Phi(r,p,q;x,L)} \le L \cdot C(r,p,q),
\]
where \(c_0 \coloneqq (r+1)\frac{(r+p)!}{p!}\) and \(C(r,p,q) \coloneqq c_0 \sum_{j=0}^{q}(2r)^{j}.\)
\end{lem}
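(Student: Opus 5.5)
We prove the bound by induction on $q$, with $r$ and $p$ fixed and $x\ge 0$, $L\ge1$ arbitrary. The target constant satisfies the recursion
\[
C(r,p,q)=c_0+2r\,C(r,p,q-1),\qquad C(r,p,0)=c_0 .
\]
So it suffices to split $\Phi(r,p,q;x,L)$ into two parts: a ``good'' part of size at most $c_0L$, and a ``bad'' part that injects into $r$ copies of $\Phi(r,p,q-1;x,2L)$.

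\textbf{Base case $q=0$.} Here $\delta=\emptyset$ and $x<\Sigma\beta<x+L$. Consider the map $(\vec a;\beta)\mapsto n\coloneqq \Sigma\vec a+\Sigma\beta$, which takes values in $(x,x+(r+1)L)$, a set of at most $(r+1)L$ integers. For a fixed $n$, the multiset $\multi{a_1,\dots,a_r}\uplus\beta\in\multibinom{\cA}{r+p}$ has sum $n$. Since $r+p\le h$, Lemma~\ref{lem:B_h is B_p} and Corollary~\ref{cor:Vs} show this multiset is the unique $V_n$. A fixed multiset of size $r+p$ can be split into an ordered $r$-tuple and a complementary $p$-multiset in at most $(r+p)!/p!$ ways. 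Hence $\abs{\Phi(r,p,0;x,L)}\le (r+1)L\,(r+p)!/p!=c_0L$.

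\textbf{Inductive step $q\ge1$.} Call a triple $(\vec a;\beta,\delta)$ \emph{good} if no $a_i$ lies in $\delta$, and \emph{bad} otherwise.

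For good triples, again map to $n=\Sigma\vec a+\Sigma\beta-\Sigma\delta\in(x,x+(r+1)L)$. Put $P=\multi{\vec a}\uplus\beta$ and $Q=\delta$. Then $P\cap Q=\emptyset$, because $\beta\cap\delta=\emptyset$ and the triple is good.

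Suppose two good triples give the same $n$. Then $\Sigma(P\uplus Q')=\Sigma(P'\uplus Q)$, and both sides are multisets of size $r+p+q\le h$. The $B_{r+p+q}$ property forces $P\uplus Q'=P'\uplus Q$, and Lemma~\ref{lem:cancel} then gives $P=P'$ and $\delta=\delta'$. So each $n$ has at most $(r+p)!/p!$ good preimages, exactly as in the base case. This gives at most $c_0L$ good triples.

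For a bad triple, choose the least $i$ with $a_i\in\delta$, and set $\delta^-\coloneqq\delta$ with one copy of $a_i$ removed. Then $\beta\cap\delta^-=\emptyset$ and $|\delta^-|=q-1$. Moreover
\[
\Sigma\beta-\Sigma\delta^-=(\Sigma\beta-\Sigma\delta)+a_i\in(x,\,x+2L),
\]
since $0\le a_i<L$. Thus $(\vec a;\beta,\delta^-)\in\Phi(r,p,q-1;x,2L)$. The map $(\vec a;\beta,\delta)\mapsto\bigl(i,(\vec a;\beta,\delta^-)\bigr)$ is injective, because $\delta=\delta^-\uplus\multi{a_i}$ is recovered from the image. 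The hypothesis $r+p+(q-1)\le h$ holds, so the induction hypothesis applies and bounds the bad triples by $r\cdot 2L\cdot C(r,p,q-1)$. Adding the two parts and using the recursion gives $\abs{\Phi(r,p,q;x,L)}\le L\,C(r,p,q)$.

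\textbf{Main obstacle.} The delicate point is the good case. We must make sure the disjointness needed by Lemma~\ref{lem:cancel} genuinely holds, namely $P\cap Q=\emptyset$, which uses both $\beta\cap\delta=\emptyset$ and goodness. We must also make sure the sizes match, so the $B_h$ property applies with $r+p+q\le h$. The bad-case reduction exists precisely to remove the overlaps between $\vec a$ and $\delta$ that would break this disjointness. It costs a doubled window $2L$, which is the source of the factor $2r$ in the recursion.
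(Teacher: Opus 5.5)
Your proposal is correct and follows the paper's proof essentially step for step. It uses induction on $q$, the base case via the sum map into $(x,x+(r+1)L)$ with at most $(r+p)!/p!$ splittings per value, and the good/bad split in which bad triples are pushed into $\Phi(r,p,q-1;x,2L)$ by deleting one copy of some $a_i$ from $\delta$. Your choice of the least index $i$ with $a_i\in\delta$ simply makes the paper's ``assume $a_1\in\delta$ and multiply by $r$'' step explicit, as an injection into $\{1,\dots,r\}\times\Phi(r,p,q-1;x,2L)$.
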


\begin{proof}
We induct on $q$, using $q=0$ as our base case.

With $q=0$, any $(\beta,\delta)\in T(p,0;x,L)$ has $\delta=\emptyset$ and $x<\Sigma\beta<L+x$. Consider the map taking 
$(\vec a;\beta)$ to $m\coloneqq \Sigma \vec a + \Sigma\beta$.  We have $x < m < (r+1)L+x$, giving $(r+1)L$ possibilities for $m$.  If two tuples share
$m$, then $\multi{a_1,\dots,a_r } \uplus \beta$ and $\multi{a_1',\dots,a_r'}\uplus\beta'$ are size $(r+p)$ multisets with the same sum $m$. As $r+p\le h$ and $\cA$ is a $B_h$-set,
  \[\multi{a_1,\dots,a_r}\uplus\beta = \multi{a_1',\dots,a_r'}\uplus\beta';\]
call this multiset $G$. 
The elements of the multiset $G$ can be listed in at most $(r+p)!$ ways, and the ordering of the entries that will be put into $\beta$ is irrelevant, so there are at most $(r+p)!/p!$ tuples $(\vec a;\beta) \in \cA^r \times \multibinom{\cA}{p}$.  Hence 
  \[\abs*{\Phi(r ,p,0;x,L)} \le \frac{(r +p)!}{p!} (r+1)L=c_0L = L \cdot C(r,p,0),\]
which establishes the base of our induction.

We now assume that $q\ge 1$, that for every $x\ge 0$ and $L\ge 1$ we have
  \[\abs{\Phi(r,p,q-1;x,L)} \le L\cdot C(r,p,q-1),\] 
and that $r+p+q \le h$, and will show that 
  \[\abs*{\Phi(r,p,q;x,L)} \le L\cdot C(r,p,q) = c_0L + r \cdot 2L\cdot c_0 \sum_{j=0}^{q-1}(2r )^j.\]

We split the count into two cases: $((a_1,\dots,a_r );\beta,\delta)\in \Phi(r,p,q;x,L)$ where no $a_i$ is in $\delta$, and where some $a_i \in\delta$ and we can use the induction hypothesis.

First, we suppose that no $a_i$ is in $\delta$. Map
$(\vec a;\beta,\delta)\in \Phi(r,p,q;x,L)$ to $\Sigma\vec a+\Sigma\beta - \Sigma\delta =: m$,
so $x< m<(r+1)L+x$ and there are fewer than $(r+1)L$ choices for $m$.  If two tuples map to the same $m$, then we have two multisets with $r+p+q$ elements in a $B_{r+p+q}$-set (since $r+p+q \le h$) having the same sum, and so the multisets are equal:
  \[\multi{a_1,\dots,a_r } \uplus \beta \uplus \delta'=\multi{a_1',\dots,a_r '} \uplus \beta'\uplus\delta\] 
Put $P=\multi{a_1,\dots,a_r } \uplus \beta$, $Q=\delta$, $P'=\multi{a_1',\dots,a_r '}\uplus\beta'$, $Q'=\delta'$ into Lemma~\ref{lem:cancel}, and we conclude that $P=P'$ and $\delta=\delta'$. Thus each value of $m$ determines $\delta$ and the unique multiset $\multi{a_1,\dots,a_r } \uplus \beta$, and the latter splits into $(\vec a;\beta)$ in at most $(r +p)!/p!$ ways. Thus, there are at most $\frac{(r +p)!}{p!}(r +1)L=c_0L$ tuples $(\vec a;\beta,\delta)$ with no $a_i$ in $\delta$.

The case where some $a_i\in\delta$ is a touch more involved. To start, we can make a count with the assumption that $a_1\in \delta$, and then multiply by $r$ to get an upper bound on the number of tuples with \emph{some} $a_i\in\delta$. Delete one copy of $a_1$ from $\delta$ to get $\delta^- \in \multibinom{\cA}{q-1}$ and $\multi{a_1}\uplus \delta^- = \delta$. We have $\beta \cap \delta^-=\emptyset$ and
  \[x < \Sigma\beta-\Sigma\delta^- = (\Sigma\beta-\Sigma\delta)+a_1 < 2L+x,\] 
so $(\vec a; \beta, \delta^-) \in \Phi(r ,p,q-1;x,2L)$. 
By the induction hypothesis,
  \[\abs*{\Phi(r ,p,q-1;x,2L)} \le 2L \cdot C(r,p,q-1)
  = 2L \cdot c_0\sum_{j=0}^{q-1} (2r )^j.\]
Putting the first element of $\vec a$ back into $\delta$ does not increase the count, which stands at: at most
  \[ r  \cdot 2L \cdot c_0 \sum_{j=0}^{q-1} (2r )^j.\]

The two cases combine to give at most
  \[\abs*{\Phi(r ,p,q;x,L)} \le c_0L + r \cdot 2L \cdot c_0\sum_{j=0}^{q-1} (2r )^j 
    = L \cdot c_0 \sum_{j=0}^{q} (2r )^j,\]
as needed to complete the induction.
\end{proof}

The particular instance we will use follows.
\begin{lem}\label{lem:Jia in action}
For all $N\ge 1$ and $1\le r <k$, one has
  \[\abs*{\Phi(2r,k-r,k-r;0,N)} \le N \cdot C(2r,k-r,k-r).\]
\end{lem}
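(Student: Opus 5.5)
This is a direct specialization of Lemma~\ref{lem:jia}, so the plan is to check its hypotheses for the specific choice of parameters. We take $\cA$ to be the $B_h$-set of the ambient discussion, with $h=2k$ even. In Lemma~\ref{lem:jia} we substitute
\[
r \mapsto 2r,\qquad p\mapsto k-r,\qquad q\mapsto k-r,\qquad x\mapsto 0,\qquad L\mapsto N.
\]

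The hypotheses to verify are as follows.

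\begin{enumerate}[label=(\arabic*),noitemsep]
\item \emph{Nonnegativity.} Since $1\le r<k$, we have $2r\ge 2$ and $k-r\ge 1$, so all three parameters are nonnegative integers.
\item \emph{Size condition.} The parameters sum to $2r+(k-r)+(k-r)=2k=h$. This meets the requirement $r+p+q\le h$ with equality, and that is exactly where evenness of $h$ enters.
\item \emph{Range of $x$ and $L$.} We have $x=0\ge 0$ and $L=N\ge 1$, as the statement assumes.
\end{enumerate}

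With these verified, Lemma~\ref{lem:jia} gives directly
\[
\abs*{\Phi(2r,k-r,k-r;0,N)}\le N\cdot C(2r,k-r,k-r).
\]

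There is no real obstacle here. The only point worth a moment's care is that Lemma~\ref{lem:jia} was proved for an arbitrary $B_h$-set and for all $x\ge0$ and $L\ge1$, with the constant $C$ depending only on $(r,p,q)$. Consequently the bound holds uniformly in $N$, which is what the later energy estimate will need. In particular, no separate appeal to Lemma~\ref{lem:B_h is B_p} is required, since Lemma~\ref{lem:jia} already handles the passage to $B_{r+p+q}$ internally.
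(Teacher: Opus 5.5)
Your proposal is correct and is exactly the paper's route: the paper states this lemma with no separate proof, as the instance of Lemma~\ref{lem:jia} with $(r,p,q,x,L)\mapsto(2r,k-r,k-r,0,N)$. Your check that $2r+(k-r)+(k-r)=2k=h$, $x=0\ge 0$ and $L=N\ge 1$ is all that is needed.
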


\begin{lem}\label{lem:sumset energy bound}
Let $h$ be even, let $\cA$ be a $B_h$-set satisfying 
  \begin{equation}\label{eq:tau appears}
    A(n) \ge \tau \, \left(\frac{n}{\log n}\right)^{1/h}
  \end{equation}
for some $\tau\ge 0$ and all $n\ge n_0\ge 3$. let $k \coloneqq h/2$, and let $\cS=k\cA$. For each $N\ge n_0$, there is a $t^\ast\in [0,N)$ that makes the block counts 
  \[F_\ell\coloneqq S(t^\ast+\ell N) - S(t^\ast+(\ell-1)N)\]
satisfy, with $M\coloneqq \floor{N/\log^3(N)}$,
  \[   \sum_{\ell=1}^{M}\binom{F_\ell}{2} \le \frac{1}{2}\,N + o(N).\]
\end{lem}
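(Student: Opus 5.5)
The plan is to average over the shift $t\in[0,N)$ and choose $t^\ast$ to be a shift at which $\sum_{\ell}\binom{F_\ell}{2}$ is at most its average. For a fixed $t$, the quantity $\sum_{\ell=1}^M\binom{F_\ell}{2}$ counts unordered pairs $s<s'$ in $\cS$ lying in a common block $[t+(\ell-1)N,t+\ell N)$ with $1\le \ell\le M$. Such a pair has $d=s'-s\in(0,N)$ and $s'<(M+2)N$. For a fixed pair with difference $d<N$, the number of $t\in[0,N)$ for which the pair falls in one block is at most $N-d$. Hence
\[
\frac1N\sum_{t=0}^{N-1}\sum_{\ell=1}^M\binom{F_\ell(t)}{2}\le \frac1N\sum_{\substack{s<s'\in\cS,\ s'<(M+2)N\\ d=s'-s<N}}(N-d),
\]
and it suffices to show that the right side is $\tfrac12N+o(N)$.

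Next, I classify each pair by its common part. By Corollary~\ref{cor:Vs} we have $s=\Sigma V_s$ and $s'=\Sigma V_{s'}$ with $V_s,V_{s'}\in\multibinom{\cA}{k}$. Put $\gamma=V_s\Cap V_{s'}$ and $r=|\gamma|$, and write $V_{s'}=\gamma\uplus\beta$ and $V_s=\gamma\uplus\delta$. Then $\beta\cap\delta=\emptyset$, $|\beta|=|\delta|=k-r$, and $\Sigma\beta-\Sigma\delta=d$. The case $r=k$ is impossible since $s\ne s'$.

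For $r=0$ I claim that each $d$ arises from at most one pair $(\beta,\delta)$. Indeed, if $\Sigma\beta-\Sigma\delta=\Sigma\beta'-\Sigma\delta'$, then $\beta\uplus\delta'$ and $\beta'\uplus\delta$ are $h$-multisets with equal sums, so they are equal by the $B_h$-property. Lemma~\ref{lem:cancel} then gives $\beta=\beta'$ and $\delta=\delta'$, which fixes the pair $(s,s')$. So the $r=0$ pairs contribute at most $\frac1N\sum_{d=1}^{N-1}(N-d)\le \tfrac12 N$, which is the main term.

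For $1\le r<k$, I bound the contribution crudely by the number of triples $(\gamma,\beta,\delta)$, using $N-d\le N$. The multiset $\gamma$ has elements in $\cA_{(M+2)N}$, so there are at most $A((M+2)N)^r$ choices for it. The pair $(\beta,\delta)$ lies in $T(k-r,k-r;0,N)$. By Lemma~\ref{lem:Jia in action} and the factorization \eqref{eq:factor Phi},
\[
|T(k-r,k-r;0,N)|\le \frac{C(2r,k-r,k-r)\,N}{A(N)^{2r}}.
\]
Lemma~\ref{lem:easy Bh bound} gives $A((M+2)N)^r=O\bigl((N^2/\log^3N)^{r/h}\bigr)$, and the hypothesis \eqref{eq:tau appears} gives $A(N)^{2r}\ge\tau^{2r}(N/\log N)^{2r/h}$. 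So the contribution for each such $r$ is
\[
O\!\left(N\cdot\frac{N^{2r/h}(\log N)^{-3r/h}}{N^{2r/h}(\log N)^{-2r/h}}\right)=O\bigl(N(\log N)^{-r/h}\bigr)=o(N).
\]
There are finitely many values of $r$, so summing over $1\le r<k$ still gives $o(N)$.

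The main obstacle is this balancing of the $r\ge1$ terms. The choice $M=\floor{N/\log^3N}$ is exactly what makes the number of possible common parts $\gamma$, of order $(MN)^{r/h}$, smaller than the gain $A(N)^{2r}$ coming from Jia's lemma by a power of $\log N$. Jia's lemma is essential here because the bound on $|T|$ alone would be too weak. The argument also needs $\tau>0$, so that $A(N)^{2r}$ is bounded below; this is the case in which the lemma will be applied. The final step is to note that some $t^\ast\in[0,N)$ attains at most the average, which gives $\sum_{\ell=1}^M\binom{F_\ell}{2}\le\tfrac12N+o(N)$.
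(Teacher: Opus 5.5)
Your proposal is correct and follows the paper's proof essentially step for step. You average over shifts $t$ to reach $\frac1N\sum_d (N-d)P(d)$, stratify pairs by $r=|V_s\Cap V_{s'}|$, and show each $d$ has at most one disjoint ($r=0$) pair via the $B_h$-property and Lemma~\ref{lem:cancel}. You then dispose of $1\le r<k$ via the bound by $A(W)^r\,|T(k-r,k-r;0,N)|$, the factorization \eqref{eq:factor Phi}, Lemma~\ref{lem:Jia in action}, and Lemma~\ref{lem:easy Bh bound}. Your looser cutoff $(M+2)N$ in place of the paper's $W=(M+1)N$ is harmless. Your remark that $\tau>0$ is needed is correct: the paper's proof also divides by $\tau^{2r}$, so its stated allowance $\tau\ge 0$ is only ever used with $\tau>0$.
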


\begin{proof}
For $N\ge 3$, put $W\coloneqq (M+1)N=N^2/\log^3(N)+O(N)$.  For integers
$t\in[0,N)$ and $\ell \in [1,M]$, set
  \[F_\ell^{(t)}\coloneqq \abs*{\cS \cap [t+(\ell-1)N, t+\ell N)}
  =S(t+\ell N)-S(t+(\ell-1)N).\]  
A pair $(s,s') \in \cS_W\times \cS_W$ with $s-s'=d \in[1, N)$, lies in a common block $[t+(\ell-1)N, t+\ell N)$ for at most $N-d$ offsets, so
\begin{equation}\label{eq:average}
   \frac1N\sum_{t=0}^{N-1}\sum_{\ell=1}^{M}\binom{F_\ell^{(t)}}{2}
    \le \frac1N\sum_{d=1}^{N-1}(N-d) P(d),
\end{equation}
where $P(d)$ is the number of such pairs with difference $d$.  
That is,
  \[P(d) \coloneqq \#\left\{ (s,s') \in (\cS_W)^2 \colon s-s'=d \right\}.\]
Choose $t^\ast\in[0,N)$ so that
  \[\sum_{\ell=1}^{M}\binom{F_\ell^{(t^\ast)}}{2} \le \frac1N\sum_{t=0}^{N-1}\sum_{\ell=1}^{M}\binom{F_\ell^{(t)}}{2}\]
and set $F_\ell \coloneqq F_\ell^{(t^\ast)}$. We have
  \begin{equation}\label{eq:P appears}
    \sum_{\ell=1}^{M}\binom{F_\ell}{2}
    \le \frac1N\sum_{d=1}^{N-1}(N-d) P(d).
  \end{equation}

For each $s\in \cS$, we identify the unique $V_s \in \multibinom{\cA}{k}$ with $\Sigma V_s =s $ (unique because $\cA$ is a $B_k$-set by Lemma~\ref{lem:B_h is B_p}). 
We stratify $P(d)$ by the size of $V_s \Cap V_{s'}$. Namely, set
  \[P_r(d) \coloneqq \#\left\{ (s,s')\in (\cS_W)^2 \colon s-s'=d ,\abs*{V_s \Cap V_{s'}}=r \right\}.\]
We have, $P(d) = \sum_{r=0}^k P_r(d)$. For $d\ge1$, we have $P_k(d)=0$, as $|V_s \Cap V_{s'}| =k$ implies that $V_s=V_{s'}$, so that $s=s'$ and $s-s'=d=0$. 
Thus, Line~\eqref{eq:P appears} becomes
  \begin{equation}\label{eq:P_r appears}
    \sum_{\ell=1}^{M-1}\binom{F_\ell}{2}
    \le \frac1N\sum_{d=1}^{N-1}(N-d) P_0(d)+ \sum_{r=1}^{k-1} \frac1N\sum_{d=1}^{N-1}(N-d) P_r(d).
  \end{equation}

We now show that for $d\ge 1$, we have $P_0(d)\le 1$. 
We see that $P_0(d)$ counts the number of pairs $(s,s')\in (\cS_W)^2$ with $s-s'=d$ and $V_s \Cap V_{s'}=\emptyset$.
Suppose that both $(s,s')$ and $(u,u')$ are such pairs. 
Then $s+u'=u+s'$, an identity of $2k$-fold sums, so by the $B_{2k}$ property
  \[V_s \uplus V_{u'} = V_{u} \uplus V_{s'}.\]
By Lemma~\ref{lem:cancel}, $V_s=V_u$ and $V_{s'}=V_{u'}$, from which it follows that $s=u$ and $s'=u'$. 

Therefore
\begin{equation}\label{eq:disjoint}
   \frac1N \sum_{d=1}^{N-1}(N-d) P_0(d) \le \frac 1N \sum_{d=1}^{N-1}(N-d) = \frac12 (N-1).
\end{equation}
The bound~\eqref{eq:P_r appears}, with $N-d \le N$ becomes
  \begin{equation}\label{eq:P_0 is gone}
    \sum_{\ell=1}^{M-1}\binom{F_\ell}{2}
    \le \frac12 N + \sum_{r=1}^{k-1} \sum_{d=1}^{N-1} P_r(d).
  \end{equation}

We now consider $1 \le r < k$, and show that for each such $r$ we have $\sum_d P_r(d)=o(N)$.  
Consider a pair $(s,s')$ counted by $\sum_{d=1}^{N-1} P_r(d)$, so that there are unique $\alpha\in \multibinom{\cA}{r}, \beta,\delta \in \multibinom{\cA}{k-r}$ with $\alpha =V_s \Cap V_{s'}$ (so $\abs{\alpha}=r$), $V_s = \alpha \uplus \beta$ and $V_{s'} = \alpha \uplus \delta$ (so $\abs{\beta}=k-r = \abs{\delta}$). 
Note that $\beta \cap \delta = \emptyset$, since at each $x$ either $\mult_\beta(x)=0$ or $\mult_\delta(x)=0$.
The map from $(s,s')$ to $(\alpha;\beta,\delta)$ is injective. 
Since $\Sigma \beta -\Sigma\delta =s-s' =d \in [1,N)$, we see that $(\beta,\delta) \in T(k-r,k-r;0,N)$.

Hence, using $r\ge 1$ we have the bound $\binom{A(W)+r-1}{r}\le A(W)^{r}$, and so
\[
   \sum_{d=1}^{N-1}P_r(d)
   \le \abs*{\multibinom{\cA_W}{r}}\cdot\abs*{T(k-r,k-r;0,N)}
   \le A(W)^{r} \abs*{T(k-r,k-r;0,N)}.
\]
Now multiply and divide by $A(N)^{2r}$ and apply the factorization~\eqref{eq:factor Phi} to get
\begin{align*}
   \sum_{d=1}^{N-1}P_r(d)
   &\le\frac{A(W)^{r}}{A(N)^{2r}}\cdot A(N)^{2r} \abs*{T(k-r,k-r;0,N)}\\
   &=\frac{A(W)^{r}}{A(N)^{2r}} \abs*{\Phi(2r,k-r,k-r;0,N)}.
\end{align*}
Lemma~\ref{lem:Jia in action} gave us
\[
   \abs*{\Phi(2r,k-r,k-r;0,N)}\le N \cdot C(2r,k-r,k-r),
\]
bringing our bound to
  \[\sum_{d=1}^{N-1} P_r(d) \le  C(2r,k-r,k-r) \, \frac{A(W)^{r}}{A(N)^{2r}}\,  N.\]

It remains to see that $A(W)^r/A(N)^{2r}=o(1)$. This is why Jia introduced a growth regularity hypothesis $A(N^2) = O(A(N)^2)$. As Erd\H{o}s, Helm, and Jia weren't thinking of producing explicit constants, they never separated $M$ to be its own parameter, and just used $M=N$. The flexibility afforded by this additional parameter is what allows our argument to proceed without Jia's regularity hypothesis. That is, this step is why we have $M=N/\log^3 N$ and not $M=N/\log N$ (as in Part I of this series of papers) or $M=N$ (as in Chen's work).

By Lemma~\ref{lem:easy Bh bound} and
$W=N^2/\log^3 N$,
\[
   A(W)\le(h\cdot h!)^{1/h}W^{1/h}
   =(h\cdot h!)^{1/h} \frac{N^{2/h}}{\log^{3/h} N},
\]
while~\eqref{eq:tau appears} gives $A(N)\ge \tau N^{1/h}/\log^{1/h} N$ for
$N\ge n_0$.  Therefore
\[
   \frac{A(W)^{r}}{A(N)^{2r}}
   \le\frac{(h\cdot h!)^{r/h}}{\tau^{2r}} 
     \frac{\log^{-3r/h} N}{\log^{-2r/h} N}
   =\left(\frac{(h\cdot h!)^{1/h} / \tau^{2}}{\log^{1/h} N}\right)^r,
\]
which goes to $0$ as $N\to \infty$.
Combining, for each fixed $1\le r\le k-1$,
  \[
   \sum_{d=1}^{N-1}P_r(d)
   \le\frac{(h\cdot h!)^{r/h} C(2r,k-r,k-r)}{\tau^{2r}}\cdot
     \frac{N}{\log^{r/h} N}=O\left(\frac{N}{\log^{1/h} N}\right),
  \]
so the finite sum $\sum_{r=1}^{k-1}\sum_{d}P_r(d)=o(N)$.  The bound on Line~\eqref{eq:P_0 is gone} becomes
  \[   \sum_{\ell=1}^{M-1}\binom{F_\ell}{2}\le\frac{N}{2}+o(N), \]
which concludes the proof of Lemma~\ref{lem:sumset energy bound}.
\end{proof}

\section{The proof of Theorem~\ref{thm:main}}\label{sec:main proof} 
We now assemble the lemmas of the previous section into a proof of Theorem~\ref{thm:main}.

\begin{proof}
Assume, by way of contradiction, that
  \begin{equation}\label{eq:contradiction hypothesis}
    A(x) \ge \tau \left(\frac{x}{\log x}\right)^{1/h}
    \qquad \text{and} \qquad
    \tau >
    \left(\frac{\pi}{\log 2} \cdot \frac{\Gamma(1+\tfrac h2)^2}{\Gamma(1+\tfrac1h)^{h}}\right)^{1/h}
  \end{equation}
for all $x\ge n_0\ge 3$.

Let $k=h/2$, and let $\cS=k\cA$ be the $k$-fold sumset with counting function $S(n)\coloneqq \abs{\cS\cap[0,n)}$. By \eqref{eq:S(n) = O(sqrt n)} we have $S(n)=O(n^{1/2})$. With $M=\floor{N/\log^3 N}$, we have (as $N\to \infty$)
\begin{enumerate}[label=\textit{(\roman*)},noitemsep]
  \item $S((M+1)N)= O(((M+1)N)^{1/2}) = O(\frac{N}{\log^{3/2} N}) = o(N)$;
  \item $\log(M)/\log(N) \to 1$;
  \item $S(N) = O(N^{1/2}) = o(\sqrt{N\log N})$.
\end{enumerate}
By Lemma~\ref{lem:sumset energy bound}, there is a $t^\ast\in[0,N)$ with
  \[\sum_{\ell=1}^M \binom{F_\ell}{2} \le \frac12 N + o(N).\]
We satisfy the hypotheses of Lemma~\ref{lem:energy} with $c=1/2$, and so we conclude that
  \[\liminf_{m\to\infty} \frac{S(m)}{\sqrt{m/\log m}} \le \sqrt{\frac{4}{\log 2}}.\]
 
On the other hand, Line~\eqref{eq:contradiction hypothesis} is precisely the hypothesis of Lemma~\ref{lem:simplex}, which gives, for every large $m$, that
  \[\liminf_{m\to\infty} \frac{S(m)}{\sqrt{m/\log m}} \ge \frac{2\tau^k \Gamma(1+\tfrac1h)^k}{k!\sqrt{\pi}}.\]
Comparing the upper and lower bound on the $\liminf$ gives
  \[\tau^k \le \sqrt{\frac{4}{\log 2}} \frac{k!\sqrt{\pi}}{2\Gamma(1+1/h)^k}.\]
Squaring both sides gives
  \[\tau^h \le \frac{1}{\log 2} \frac{k!^2 \pi}{\Gamma(1+1/h)^h},\]
contradicting~\eqref{eq:contradiction hypothesis}.
\end{proof}

In Part I, the author describes weaknesses in the structure of those arguments. Those comments apply equally well to the arguments above.

\section{Further problems}\label{sec:problems}
We suspect that every $B_h$-set $\cA$ has
  \[\liminf_{n\to\infty} \frac{A(n)}{(n/\log n)^{1/h}} = 0,\]
whether $h$ is even or odd. 
For odd $h$, Green~\cite{2001.Green} proves that
  \[{A(n)}\le {n^{1/h}} \cdot (\sqrt{\pi/k}\,(k!)^2)^{1/h}+o(n^{1/h}),\]
but nothing more is known about the $\liminf A(n)/n^{1/h}$ for general $h$.

We also suspect that for every $h$ there is a $B_h$-set $\cG$ with
  \[ \limsup_{n\to\infty} \frac{G(n)}{n^{1/h}} > 0.\]
This is known for $h=2$ (see~\cite{1961.Kruckeberg}), but the author is unaware of any extension to $h\ge3$.

\section*{Tool and computational resource disclosure}
This work was developed in interaction with Anthropic's \emph{ClaudeAI}, the Fable model. 
Algebra, calculus, and inequalities were checked with Wolfram's \emph{Mathematica 14.3}.
Lamport's \LaTeX\ was used both for typesetting and interacting with \emph{ClaudeAI}. 
While the writing has been heavily influenced by \emph{ClaudeAI}, every line and implication has been understood, re-organized, and re-written by the human author, who takes responsibility for the correctness and clarity of this work.

\end{document}